\tikzset{> =stealth}
\newcommand{\addQEDstyle}[2]{\AtBeginEnvironment{#1}{\pushQED{\qed}\renewcommand{\qedsymbol}{#2}}\AtEndEnvironment{#1}{\popQED}}
\theoremstyle{plain}
\newtheorem{theorem}{Theorem}[section]
\newtheorem{lemma}[theorem]{Lemma}
\newtheorem{proposition}[theorem]{Proposition}
\theoremstyle{definition}
\newtheorem{definition}[theorem]{Definition}
\theoremstyle{remark}
\newtheorem{remark}[theorem]{Remark}
\newtheorem{example}[theorem]{Example}
\renewcommand{\epsilon}{\varepsilon}
\renewcommand{\phi}{\varphi}
\newcommand{\id}{\mathrm{id}}
\newcommand{\splitext}[6]{%
\tikz[baseline]{
\newdimen{\mylabelwidth}
\newdimen{\skipwidth}
\node[anchor=base] (A) {\hspace*{\dimexpr0.5pt-\pgfkeysvalueof{/pgf/inner xsep}}${#1}$};
\settowidth{\mylabelwidth}{\pgfinterruptpicture {$#2$} \endpgfinterruptpicture}
\pgfmathsetlength{\skipwidth}{max(\mylabelwidth,10pt)}
\node[right] (B) at ([xshift=\skipwidth+12pt]A.east) {${#3}$};
\settowidth{\mylabelwidth}{\pgfinterruptpicture {$#4$} \endpgfinterruptpicture}
\settowidth{\skipwidth}{\pgfinterruptpicture {$#5$} \endpgfinterruptpicture}
\pgfmathsetlength{\skipwidth}{max(\skipwidth,\mylabelwidth,10pt)}
\node[right] (C) at ([xshift=\skipwidth+12pt]B.east) {${#6}$\hspace*{\dimexpr0.5pt-\pgfkeysvalueof{/pgf/inner xsep}}};
\draw[right hook->] (A) to node [above] {${#2}$} (B);
\draw[transform canvas={yshift=0.5ex},->>] (B) to node [above] {${#4}$} (C);
\draw[transform canvas={yshift=-0.5ex},->] (C) to node [below] {${#5}$} (B);
}}
\title{Weakly Schreier extensions for general algebras}
\author[G. Manuell]{Graham Manuell}
\address{CMUC, Department of Mathematics, University of Coimbra, Coimbra, Portugal}
\email{graham@manuell.me}
\thanks{The first author acknowledges financial support from the Centre for Mathematics of the University of Coimbra (UIDB/00324/2020, funded by the Portuguese Government through FCT/MCTES)}
\author[N. Martins-Ferreira]{Nelson Martins-Ferreira}
\address{Instituto Politécnico de Leiria, Leiria, Portugal}
\email{martins.ferreira@ipleiria.pt}
\thanks{The second author's work is supported by Fundação para a Ciência e a Tecnologia FCT/MCTES (PIDDAC) through the following Projects:  Associate Laboratory ARISE LA/P/\-0112/2020; UIDP/04044/2020; UIDB/04\-044/2020; PAMI - ROTEIRO/0328/2013 (Nº 022158); MATIS (CEN\-TRO-01-0145-FEDER-000014 - 3362); Generative.Thermodynamic; by CDRSP and ESTG from the Polytechnic Institute of Leiria.}
\subjclass[2020]{18G50, 08C05, 20M50}
\keywords{semidirect product, semibiproduct, classically ideal determined variety}
\date{5 June 2023}
\begin{document}

\maketitle
\thispagestyle{empty}

\begin{abstract}
Weakly Schreier split extensions are a reasonably large, yet well-understood class of monoid extensions, which generalise some aspects of split extensions of groups.
This short note provides a way to define and study similar classes of split extensions in general algebraic structures (parameterised by a term $\theta$). These generalise weakly Schreier extensions of monoids, as well as general extensions of semi-abelian varieties (using the $\theta$ appearing in their syntactic characterisation). Restricting again to the case of monoids, a different choice of $\theta$ leads to a new class of monoid extensions, more general than the weakly Schreier split extensions.
\end{abstract}

\section{Introduction}

In the category of groups, the group $A$ in a split extension $\splitext{X}{k}{A}{p}{s}{B}$ can be obtained from $X$, $B$ and an action of $B$ on $X$ by the semidirect product construction. The theory of split extensions of groups is important for understanding the structure of groups, building new groups from old ones, and doing homological algebra.
An analogous theory can be developed for any semi-abelian variety \cite{bourn1998protomodularity,bourn2003characterization,clementino2015semidirect}.

Monoids fail to form a semi-abelian variety, since not all split extensions are well-behaved.
However, restricting to the class of \emph{Schreier} split extensions results in a theory which keeps the desirable properties of extensions of groups (see \cite{patchkoria1998crossed,bourn2014schreier,bourn2016monoids}).
If we relax the conditions somewhat, yet more general classes of monoid extensions can be considered including that of \emph{weakly Schreier split extensions} \cite{bourn2015partialMaltsev,faul2022survey}.
\begin{definition}\label{def:Schreier_extension}
 A \emph{weakly Schreier split extension} of monoids is a diagram \[\splitext{X}{k}{A}{p}{s}{B}\] where $k$ is the kernel of $p$, $s$ is a section of $p$ and such that for every $a \in A$ there is an $x \in X$ such that $k(x) \cdot sp(a) = a$. The split extension is \emph{Schreier} if each $x$ is unique.
\end{definition}
These extensions were characterised in \cite{faul2021characterization} and were later studied in \cite{martins2020semibiproduct,martins2023semibiproduct} as `semibiproducts', where a certain retraction map $q$ of the kernel map $k$ is considered as part of the structure.

There has been some interest in extending the notion of Schreier extensions to other varieties of algebra (see \cite{martins2013semidirect,gran2019split}), though this has not yet been carried out for weakly Schreier extensions.
The aim of this paper is to provide a very general definition of weakly Schreier split extension for pointed varieties that recovers the usual theory for monoids, as well as for general semi-abelian varieties.
Our notion is parameterised by a term $\theta$ and by taking a different choice of $\theta$ we can also obtain a more general class of monoid extensions with similar properties.

In a Schreier split extension of monoids $\splitext{X}{k}{A}{p}{s}{B}$, the middle monoid $A$ always has the cartesian product $X \times B$ as its underlying set (but possibly with a different multiplication). Weakly Schreier extensions generalise these by allowing the underlying set of $A$ to be a certain subset (or equivalently a certain quotient) of $X \times B$. We will show in \cref{eg: A=5 XxB=4} that the new class of monoid extensions we define is general enough to include cases where the cardinality of $A$ is strictly \emph{larger} than $X \times B$.

In the final section we show that equipping a category of algebras with the class of $\theta$-weakly-Schreier split extensions always gives an $S$-protomodular category in the weak sense of \cite{bourn2015partialMaltsev}.

\section{Background and motivation}

Group extensions are among the most important concepts in group theory: they allow us to decompose complex groups into simpler parts and to use knowledge about these parts to tell us things about the larger group. Indeed, the ability to decompose groups in this way is the primary motivation for the famous classification of finite simple groups (see \cite{gorenstein2013finite}).

An important class of extensions is given by \emph{split extensions}.
\begin{definition}
 A \emph{split extension} of groups is a diagram \[\splitext{X}{k}{A}{p}{s}{B}\]
 where $p$ is a quotient map, $k$ is the kernel of $p$ and $s$ is a section of $p$.
\end{definition}

Given the kernel group $X$ and the quotient group $B$ in a split extension, the group $A$ is always given by a \emph{semidirect product} $X \rtimes B$, which has underlying set $X \times B$ and multiplication given by $(x_1,b_1) \cdot (x_2,b_2) = (x_1 \alpha(b_1,x_2), b_1 b_2)$ where $\alpha\colon B \times X \to X$ is a group action associated to the extension.
Such a classification is useful for building up groups from smaller ones.

There are also a number of `homological' lemmas that can be proved for group extensions. The following result can be useful for establishing properties of group homomorphisms between groups that can be decomposed as semidirect products.
\begin{lemma}[Split Short Five Lemma]
 Consider the following commutative diagram of groups where the top and bottom rows are split extensions.
 \begin{center}
  \begin{tikzpicture}[node distance=2.0cm, auto]
    \node (A) {$A_1$};
    \node (B) [below of=A] {$A_2$};
    \node (C) [right of=A] {$B_1$};
    \node (D) [right of=B] {$B_2$};
    \node (E) [left of=B] {$X_2$};
    \node (F) [left of=A] {$X_1$};
    \draw[->] (A) to node [swap] {$g$} (B);
    \draw[transform canvas={yshift=0.5ex},->>] (A) to node {$p_1$} (C);
    \draw[transform canvas={yshift=-0.5ex},->] (C) to node {$s_1$} (A);
    \draw[transform canvas={yshift=0.5ex},->>] (B) to node {$p_2$} (D);
    \draw[transform canvas={yshift=-0.5ex},->] (D) to node {$s_2$} (B);
    \draw[->] (C) to node {$h$} (D);
    \draw[right hook->] (F) to node {$k_1$} (A);
    \draw[right hook->] (E) to node [swap] {$k_2$} (B);
    \draw[->] (F) to node [swap] {$f$} (E);
  \end{tikzpicture}
 \end{center}
 If $f$ and $h$ are both (a) injective, (b) surjective, or (c) isomorphisms, then so is $g$.
\end{lemma}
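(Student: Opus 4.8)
The plan is to reduce everything to the semidirect-product decomposition recalled just above: in any split extension of groups every element $a$ of the middle object can be written uniquely as $a = k(x) \cdot s(b)$ with $b = p(a)$ and $x$ determined by $k(x) = a \cdot s(p(a))^{-1}$. I would first record this decomposition as the single structural fact driving the argument, noting that it holds simultaneously in both rows, so that each $a_1 \in A_1$ is $k_1(x_1)\cdot s_1(b_1)$ and each $a_2 \in A_2$ is $k_2(x_2)\cdot s_2(b_2)$, with the pairs $(x_i,b_i)$ unique. The three commutativity conditions I would extract from the diagram are $g k_1 = k_2 f$, $p_2 g = h p_1$ and $g s_1 = s_2 h$; the first and last are the ones that let $g$ interact with the decomposition.

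For part (a), assume $f$ and $h$ are injective and take $a, a' \in A_1$ with $g(a) = g(a')$. Writing $a = k_1(x)\cdot s_1(b)$ and $a' = k_1(x')\cdot s_1(b')$ and applying the homomorphism $g$, the relations $g k_1 = k_2 f$ and $g s_1 = s_2 h$ give $g(a) = k_2(f(x)) \cdot s_2(h(b))$ and similarly for $a'$. Uniqueness of the decomposition in $A_2$ forces $f(x) = f(x')$ and $h(b) = h(b')$, whence $x = x'$ and $b = b'$ by injectivity of $f$ and $h$, so $a = a'$.

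For part (b), assume $f$ and $h$ are surjective and take any $a_2 \in A_2$, decomposed as $a_2 = k_2(x_2) \cdot s_2(b_2)$. Choosing preimages $x_2 = f(x_1)$ and $b_2 = h(b_1)$ and setting $a_1 = k_1(x_1)\cdot s_1(b_1)$, the same two commutativities give $g(a_1) = k_2(f(x_1)) \cdot s_2(h(b_1)) = a_2$, so $g$ is surjective. Part (c) then follows at once, since combining (a) and (b) shows $g$ is bijective, and a bijective group homomorphism is automatically an isomorphism.

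The only real obstacle is establishing — and then consistently exploiting — the uniqueness half of the decomposition, since it is uniqueness (not mere existence) that powers the injectivity argument, while existence alone suffices for surjectivity. Once the decomposition is in hand the three cases are essentially mechanical, and notably the argument uses the group structure only through what is encoded in this normal form; this is precisely why one expects an analogous statement in the weakly Schreier setting, where a comparable (but no longer unique) expression $a = k(x)\cdot sp(a)$ is available.
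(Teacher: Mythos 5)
Your proof is correct. Note that the paper itself states this lemma without proof --- it is quoted as classical background motivating the later generalisations --- so there is no in-paper argument to compare against; but your route, via the unique normal form $a = k(x)\cdot s(p(a))$ with $k(x) = a\cdot s(p(a))^{-1} \in \ker p$, is exactly the semidirect-product decomposition the paper recalls immediately before the lemma, and all three parts (uniqueness of the decomposition powering injectivity, existence powering surjectivity, and (c) following from bijectivity) are sound. Your closing observation is also apt: the paper's whole point is that once the decomposition is weakened to a non-unique one (the weakly Schreier condition), the injectivity half of this argument breaks down, which is precisely why only the surjectivity statement survives in the paper's final lemma on extremal epimorphisms.
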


Since group extensions are so useful, there is a strong desire to generalise the concept and the associated results to other algebraic structures. A broad setting in which extensions can be studied is that of \emph{pointed protomodular categories} (see \cite{bourn1998protomodularity}). However, we will be concerned, not with general such categories, but only with varieties of algebras whose categories satisfy this condition.

These were characterised in \cite{bourn2003characterization} as those varieties with a unique constant $0$, an $(n+1)$-ary term $\theta$, and $n$ binary operations $\alpha_1,\dots,\alpha_n$ such that the equations $\alpha_i(x,x) = 0$ and $\theta(\alpha_1(x,y),\dots,\alpha_n(x,y),y) = x$ hold. We call these \emph{semi-abelian varieties}, but they had previously been studied under the name \emph{classically ideal determined varieties} in \cite{ursini1994subtractive}.

One important class of algebras that we would like to be able to handle is that of monoids. However, monoids do not form a semi-abelian variety. One problem is that general (split) extensions of monoids are not well-behaved.
For example, results such as the Split Short Five Lemma do not generally hold for monoids. Nonetheless, there are classes of monoid extension that still behave very similarly to group extensions. The notion of $S$-protomodular category was introduced to describe this situation (see \cite{bourn2016monoids}).

The `$S$' in $S$-protomodular refers to a class of split epimorphisms (equipped with their sections) which are used in the `good' split extensions. The motivating example is the class corresponding to \emph{Schreier split extensions} of monoids (see \cref{def:Schreier_extension}).

Like split extensions of groups, Schreier split extensions can be characterised in terms of a notion of semidirect product and satisfy the Split Short Five lemma.
However, there are many important instances of monoid extensions that are not Schreier (for example, see \cite{faul2021lambda}).
Therefore, the larger class of \emph{weakly Schreier split extensions} has also been studied recently (see \cite{faul2021characterization}).

There is a natural desire to unify the theory of extensions of semi-abelian algebras with that of (weakly) Schreier extensions of monoids, and to be able to handle a wider range of algebraic structures. We will show how to do this, and in the process we will also obtain an even larger class of monoid extensions for which a useful kind of `semidirect product' can be described.

\section{\texorpdfstring{$\theta$}{θ}-weakly-Schreier extensions}

Let $\mathcal{V}$ be a variety of algebras with a unique constant $0$ and a given $(n+1)$-ary term $\theta$ such that $\theta(0,\dots,0,x) = x$.
The category $\mathbb{C}_\mathcal{V}$ of such algebras is a pointed category with zero morphisms given by the constant $0$ maps.
In such a category, kernels, and hence split extensions, can be defined.

\begin{definition}
 We say a split extension \splitext{X}{k}{A}{p}{s}{B} in $\mathbb{C}_\mathcal{V}$ is \emph{$\theta$-weakly-Schreier} if there exist $n$ functions $q_1,\dots,q_n\colon A \to X$ such that $\theta(kq_1(a),\dots,kq_n(a),sp(a)) = a$.
 We may always assume $q_i(0) = 0$.
\end{definition}
Note that these $q_i$ functions are not required to be homomorphisms.
In the case of monoids with $\theta(x,y) = x + y$, this reduces to the usual definition of a weakly Schreier extension (assuming the axiom of choice).

Recall that a semi-abelian variety has a unique constant $0$, an $(n+1)$-ary term $\theta$, and $n$ binary operations $\alpha_1,\dots,\alpha_n$ such that
\begin{itemize}
 \item $\alpha_i(x,x) = 0$,
 \item $\theta(\alpha_1(x,y),\dots,\alpha_n(x,y),y) = x$.
\end{itemize}
This $\theta$ satisfies our above condition, since $\theta(0,\dots,0,x) = \theta(\alpha(x,x),\dots,\alpha(x,x),x) = x$.
In this case \emph{every} split extension \splitext{X}{k}{A}{p}{s}{B} is $\theta$-weakly-Schreier by taking \[k q_i(a) = \alpha_i(a,sp(a)).\] Note that $\alpha_i(a,sp(a))$ indeed lies in the kernel of $p$, since we have $p(\alpha_i(a,sp(a))) = \alpha_i(p(a),psp(a)) = \alpha_i(p(a),p(a)) = 0$.

The paper \cite{clementino2015semidirect} describes the semidirect product for semi-abelian varieties. We can use a similar approach to construct the middle algebra $A$ of a $\theta$-weakly-Schreier split extension in our more general setting. 

Consider a $\theta$-weakly-Schreier split extension \splitext{X}{k}{A}{p}{s}{B}.
We define a function $\psi\colon A \to X^n \times B$ by \[\psi(a) = (q_1 a,\dots,q_n a, p a)\]
and a function $\phi\colon X^n \times B \to A$ by \[\phi(x_1,\dots,x_n,b) = \theta(k x_1,\dots,k x_n, sb).\]
Note that here and elsewhere we omit the brackets around the arguments of the functions to avoid the proliferation of brackets.

These functions induce maps between the above extension and the `trivial' split extension of pointed sets \splitext{X^n}{\iota_{X^n}}{X^n \times B}{\pi_B}{\iota_B}{B} where $\iota_{X^n}(\vec{x}) = (\vec{x},0)$, $\pi_B(\vec{x},b) = b$ and $\iota_B(b) = (0,\dots,0,b)$. (Here we write $\vec{x}$ for the tuple of elements $(x_1,\dots,x_n)$.)
In the following diagram $\phi_X$ and $\psi_X$ are obtained from $\phi$ and $\psi$ from the universal property of the kernel.
It is easy to see that the other relevant squares commute by the definition of $\psi$ and $\phi$ and the assumption on $\theta$.
\begin{center}
   \begin{tikzpicture}[node distance=2cm, auto]
    \node (A) {$X$};
    \node (B) [right of=A] {$A$};
    \node (C) [right of=B] {$B$};
    \node (D) [below of=A] {$X^n$};
    \node (E) [right of=D] {$X^n \times B$};
    \node (F) [right of=E] {$B$};
    \draw[->] (A) to node {$k$} (B);
    \draw[transform canvas={yshift=0.5ex},->] (B) to node {$p$} (C);
    \draw[transform canvas={yshift=-0.5ex},->] (C) to node {$s$} (B);
    \draw[->] (D) to [swap] node {$\iota_{X^n}$} (E);
    \draw[transform canvas={yshift=0.5ex},->] (E) to node {$\pi_B$} (F);
    \draw[transform canvas={yshift=-0.5ex},->] (F) to node {$\iota_B$} (E);
    \draw[transform canvas={xshift=-0.5ex},->] (B) to [swap] node {$\psi$} (E);
    \draw[transform canvas={xshift=0.5ex},->] (E) to [swap] node {$\phi$} (B);
    \draw[transform canvas={xshift=-0.5ex},->] (A) to [swap] node {$\psi_X$} (D);
    \draw[transform canvas={xshift=0.5ex},->] (D) to [swap] node {$\phi_X$} (A);
    \draw[double equal sign distance] (C) to (F);
   \end{tikzpicture}
\end{center}

Note that $\phi \psi = \id_A$, since $\phi(\psi(a)) = \theta(kq_1(a),\dots,kq_n(a),sp(a)) = a$ by the $\theta$-weakly-Schreier condition.

On the other hand, we find that the $i^\text{th}$ component of
$\psi \phi (x_1,\dots,x_n,b)$ is equal to $q_i(\theta(k x_1,\dots,k x_n,s b))$ for $i = 1,\dots,n$, while the last component is $ps(b) = b$. Thus, $A$ maps bijectively onto the subset $Y$ of $X^n \times B$ given by \[Y = \{(x_1,\dots,x_n,b) \in X^n \times B \mid q_i(\theta(k x_1,\dots,k x_n,s b)) = x_i \text{ for each $i$} \}.\]

The operations on $A$ can then be transported along this bijection so that an $m$-ary operation $\omega$ can be computed in $Y$ by
\begin{align*}
 [\omega( &(x^1_1,\dots,x^1_n,b^1), \dots, (x^m_1,\dots,x^m_n,b^m) )]_i \\
 &= [\psi \omega( \phi(x^1_1,\dots,x^1_n,b^1), \dots, \phi(x^m_1,\dots,x^m_n,b^m) )]_i \\
 &= q_i(\omega(\theta(k x^1_1,\dots,k x^1_n,s b^1), \dots, \theta(k x^m_1,\dots,k x^m_n,s b^m)))
\end{align*}
and
\begin{displaymath}
 [\omega( (x^1_1,\dots,x^1_n,b^1), \dots, (x^m_1,\dots,x^m_n,b^m) )]_{n+1} = \omega(b^1,\dots,b^m).
\end{displaymath}

If $\omega$ and $\theta$ commute then the first formula simplifies to
\begin{align*}
 [\omega( &(x^1_1,\dots,x^1_n,b^1), \dots, (x^m_1,\dots,x^m_n,b^m) )]_i = \\
 &= q_i(\theta(k\omega(x^1_1, \dots, x^m_1),\dots,k\omega(x^1_n, \dots, x^m_n), s \omega(b^1, \dots, b^m))) \\
 &= \omega(x^1_i, \dots, x^m_i).
\end{align*}
In particular, the $0$ in $Y$ is simply $(0,\dots,0,0)$.
However, in general there is not anything more that we can do to simplify this further.

The corresponding extension is \splitext{X}{k'}{Y}{\pi_B}{\iota_B}{B} where the kernel map $k'$ is given by $k'(x) = \psi k(x) = (q_1k(x),\dots,q_nk(x),0)$.
This map can also be defined without reference to the $q_i$ maps: for $(\vec{y},0) \in Y$ note that $(\vec{y},0) = k'(x)$ if and only if $\phi(\vec{y},0) = \phi k'(x)$. But $\phi k'(x) = k(x)$ and $\phi(\vec{y},0) = \theta(k(y_1),\dots,k(y_n),0) = k \theta(\vec{y},0)$, so that we have $(\vec{y},0) = k'(x)$ if and only if $\theta(\vec{y},0) = x$. Hence, $k'(x)$ is the unique element $(\vec{y},0) \in Y$ such that $\theta_X(\vec{y},0) = x$.
Also note that, by the definition of $\psi$, transporting the $q_i$ maps along the isomorphism, simply gives the restrictions of the product projections $\pi_i\colon X^n \times B \to X$.

In summary, we have the following proposition.
\begin{proposition}
A $\theta$-weakly-Schreier extension \splitext{X}{k}{A}{p}{s}{B} is isomorphic to an extension \splitext{X}{k'}{Y}{\pi_B}{\iota_B}{B}, where
$Y$ is certain subset of $X^n \times B$ and $k'(x)$ is the unique element $(\vec{y},0) \in Y$ such that $\theta_X(\vec{y},0) = x$.
 
Suppose the maps $q_1,\dots,q_n$ witness the $\theta$-weakly-Schreier condition. For each basic operation $\omega$ in the variety we define a map $\vec{\gamma}_\omega\colon (X^n \times B)^m \to X^n$ by
\begin{align*}
 (\vec{\gamma}_\omega(&x^1_1,\dots,x^1_n,b^1, \dots, x^m_1,\dots,x^m_n,b^m))_i \\ &= q_i(\omega(\theta(k x^1_1,\dots,k x^1_n,s b^1), \dots, \theta(k x^m_1,\dots,k x^m_n,s b^m))).
\end{align*}
The maps $\vec{\gamma}_\omega$ for composite terms $\omega$ can be defined in terms of these as necessary.

Then $Y$ is given by \[\{(\vec{x},b) \in X^n \times B \mid \vec{\gamma}_{\id}(\vec{x},b) = \vec{x} \},\]
or, equivalently, by
\[\{(\vec{x},b) \in X^n \times B \mid \vec{\gamma}_{\theta}(\vec{0},\vec{x},b) = \vec{x}\},\]
equipped with operations $\omega_Y$ defined by \[\omega_Y(\vec{x}^1,b^1,\dots,\vec{x}^m,b^m) = (\vec{\gamma}_\omega(\vec{x}^1,b^1,\dots,\vec{x}^m,b^m),\omega_B(b^1,\dots,b^m)).\]
\end{proposition}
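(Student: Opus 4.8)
The plan is to read the statement as a consolidation of the computations carried out in the discussion above, so that the real work reduces to packaging the set-theoretic bijection $A \cong Y$ as an isomorphism of split extensions of algebras and to checking the one genuinely new assertion, the second description of $Y$.

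First I would record the isomorphism. Since $\phi\psi=\id_A$, the composite $\psi\phi$ is idempotent, and the computation above identifies its image with the subset $Y$; hence $\psi$ and $\phi$ restrict to mutually inverse bijections between $A$ and $Y$. I would transport the algebra structure of $A$ along $\psi$, which automatically turns $Y$ into an algebra isomorphic to $A$. The relevant squares commute by the definitions of $\psi$, $\phi$ and the hypothesis $\theta(0,\dots,0,x)=x$, so this is an isomorphism of split extensions, carrying $k$, $p$, $s$ to $k'=\psi k$, $\pi_B$, $\iota_B$. The explicit form of $k'$, namely that $k'(x)$ is the unique $(\vec{y},0)\in Y$ with $\theta_X(\vec{y},0)=x$, is exactly the derivation already given from $\phi k'=k$ and $\phi(\vec{y},0)=k\theta_X(\vec{y},0)$, so nothing more is required there.

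Next I would pin down the transported operations. By construction $\omega_Y=\psi\circ\omega_A\circ(\phi\times\cdots\times\phi)$; its first $n$ components reproduce the maps $\vec{\gamma}_\omega$ verbatim, while its last component is $p(\omega_A(\cdots))=\omega_B(b^1,\dots,b^m)$, using that $p$ is a homomorphism and that $p\phi(\vec{x}^j,b^j)=b^j$. This is precisely the claimed formula $\omega_Y(\vec{x}^1,b^1,\dots,\vec{x}^m,b^m)=(\vec{\gamma}_\omega(\vec{x}^1,b^1,\dots,\vec{x}^m,b^m),\omega_B(b^1,\dots,b^m))$, and for a composite term $\omega$ the corresponding $\vec{\gamma}_\omega$ is obtained by composing the $\vec{\gamma}$ of its constituents in the evident way.

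The step I expect to require the most care is the equivalence of the two descriptions of $Y$. The first, $\vec{\gamma}_{\id}(\vec{x},b)=\vec{x}$, is merely the defining membership condition rewritten, since $(\vec{\gamma}_{\id}(\vec{x},b))_i=q_i(\theta(kx_1,\dots,kx_n,sb))$. For the second I would substitute the zero element $(\vec{0},0)$ into the first $n$ slots of $\vec{\gamma}_\theta$: because $\phi(\vec{0},0)=\theta(0,\dots,0,0)=0$ and $\theta(0,\dots,0,x)=x$, the inner expression collapses as $\theta(\phi(\vec{0},0),\dots,\phi(\vec{0},0),\phi(\vec{x},b))=\phi(\vec{x},b)$, so that $\vec{\gamma}_\theta(\vec{0},\vec{x},b)=\vec{\gamma}_{\id}(\vec{x},b)$ and the two conditions coincide. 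Conceptually the point of this reformulation is that, although $\id$ is not an operation of the variety, $\theta$ is an honest term, so the second form exhibits $Y$ as the solution set of structure maps $\vec{\gamma}_\omega$ arising from genuine operations; I would make sure the collapse above does not secretly depend on the particular choice of the $q_i$ beyond their role in fixing $Y$ itself.
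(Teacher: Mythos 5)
Your route is the paper's own: there the proposition is presented as a summary of the construction immediately preceding it ($\phi\psi=\id_A$, identification of $A$ with the fixed points $Y$ of the idempotent $\psi\phi$, transport of the operations along the bijection, and the intrinsic description of $k'$ via $\phi k' = k$), and your consolidation matches it step for step; your verification that $\vec{\gamma}_\theta(\vec{0},\vec{x},b)=\vec{\gamma}_{\id}(\vec{x},b)$, using $\phi(\vec{0},0)=0$ and $\theta(0,\dots,0,x)=x$, is exactly what the paper means by ``expanding the definitions''.

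However, one claim you make does not hold ``by the definitions'', and it is the only place where the argument needs an extra normalisation (a point the paper itself elides): that the isomorphism carries $s$ to $\iota_B$. The section transported along $\psi$ is $\psi s\colon b\mapsto(q_1s(b),\dots,q_ns(b),b)$, which equals $\iota_B(b)=(0,\dots,0,b)$ only when $q_is=0$ for all $i$; equivalently, $\iota_B$ lands in $Y$ at all only under this condition, since $(\vec{0},b)\in Y$ unwinds to $q_i(\theta(0,\dots,0,s(b)))=q_i(s(b))=0$. This is not automatic for an arbitrary witness: for monoids with $\theta(x,y)=x+y$, take $X=\{0,x\}$ and $B=\{0,b\}$ with idempotent addition and $A=(X\times B)/\bigl((0,b)\sim(x,b)\bigr)$ with the evident $k$, $p$, $s$; then $q_1(s(b))=x$ is a legitimate choice of witness, and with it $\iota_B(b)=(0,b)\notin Y$, so the extension you construct has section $\psi s\colon b\mapsto (x,b)$ rather than $\iota_B$. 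The repair is cheap but should be stated: one may always redefine $q_i(a)=0$ for $a$ in the image of $s$, since $\theta(0,\dots,0,sp(a))=sp(a)=a$ for such $a$ (this strengthens the paper's ``we may always assume $q_i(0)=0$''); with the $q_i$ so normalised, your argument goes through verbatim. Note finally that your closing worry about dependence on the choice of the $q_i$ is aimed at the wrong step: the collapse $\vec{\gamma}_\theta(\vec{0},\vec{x},b)=\vec{\gamma}_{\id}(\vec{x},b)$ holds for any choice, whereas the identification of the section with $\iota_B$ is precisely the choice-dependent point.
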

\begin{remark}
In fact, as can be seen by simply expanding the definitions, we can define the subset $Y \subseteq X^n \times B$ using $\vec{\gamma}_\omega$ for any term $\omega$ satisfying $\omega(\vec{0},x) = x$ in the variety.
This is sometimes more convenient if $\theta$ is built out of many operations.
\end{remark}

We might now ask what conditions we need to impose on arbitrary $\vec{\gamma}_\omega$ maps for this construction to give a valid $\theta$-weakly-Schreier extension.
\begin{itemize}
 \item Firstly, we need to impose conditions on the $\vec{\gamma}_\omega$ such that defining axioms of the algebra hold (when restricted to $Y \subseteq X^n \times B$).
 \item Secondly, we must ensure well-definedness of $k'$ by requiring that for all $x \in X$ there is a unique $\vec{y} \in X^n$ such that
$(\vec{y},0) \in Y$ and $\theta_X(\vec{y},0) = x$, or equivalently such that $\vec{\gamma}_\theta(\vec{0},\vec{y},0) = \vec{y}$ and $\theta_X(\vec{y},0) = x$.
 \item Next we guarantee that $k'$ is a homomorphism. Note that since $k'$ is bijective onto the restriction of its codomain to $\{(\vec{y},b) \in Y \mid b = 0\}$, it suffices to ensure the inverse $(\vec{y},0) \mapsto \theta_X(\vec{y},0)$ is a homomorphism.
 This holds when $\theta_X(\vec{\gamma}_\omega(\vec{x}^1,0,\dots,\vec{x}^m,0),0) = \omega_X(\theta_X(\vec{x}^1,0),\dots,\theta_X(\vec{x}^m,0))$ for all $\vec{x}^1,\dots,\vec{x}^m$ such that $\vec{\gamma}_\theta(\vec{0},\vec{x}^i,0) = \vec{x}^i$.
 It then follows from the definition of $k'$ that it is a kernel: clearly $\pi_B k' = 0$, while if $(\vec{y},0) \in Y$ then setting $x = \theta_X(\vec{y},0)$ we have $k'(x) = \vec{y}$.
 \item Finally, we add a condition that forces the extension to be $\theta$-weakly-Schreier (with the product projections as the $q_i$ maps). This means $\vec{\gamma_\theta}(k'(x_1),\dots,k'(x_n),\iota_B(b)) = (x_1,\dots,x_n)$ for $(x_1,\dots,x_n,b) \in Y$. It is more convenient to write this in terms of the elements $\vec{y}^i$ such that $(\vec{y}^i,0) = k'(x_i)$. Then each $x_i = \theta_X(\vec{y}^i,0)$ and so the condition becomes that $\vec{\gamma_\theta}(\vec{y}^1,0,\dots,\vec{y}^n,0,\vec{0},b) = (\theta_X(\vec{y}^1,0),\dots,\theta_X(\vec{y}^n,0))$ for all $\vec{y}^1,\dots,\vec{y}^n \in X^n$ such that $\vec{\gamma}_\theta(\vec{0},\vec{y}^i,0) = \vec{y}^i$ and $\vec{\gamma}_\theta(\vec{0},\theta_X(\vec{y}^1,0),\dots,\theta_X(\vec{y}^n,0),b) = (\theta_X(\vec{y}^1,0),\dots,\theta_X(\vec{y}^n,0))$.
\end{itemize}

\section{Examples}

We have already noted that the case of semi-abelian varieties is captured by this theory.

Another core example is, of course, that of weakly Schreier extensions of monoids. These are simply $\theta$-weakly-Schreier extensions with $\theta$ being the monoid operation $\theta(x,y) = x + y$. Our characterisation agrees with that in \cite{faul2021characterization}, though our approach is more similar to that taken in \cite{martins2020semibiproduct}.

\begin{remark}\label{rem:schreier}
Note that we might ask if \emph{Schreier} extensions of monoids have a generalisation to general algebras. This would mean requiring that the $q_i$ maps be \emph{unique}.
This is equivalent to asking the map $\phi\colon X^n \times B \to A$ to be injective (and hence bijective).
However, do note that for $n \ge 2$ this seldom holds even for semi-abelian varieties.
For example, observe that Heyting semilattices form a semi-abelian variety for $n = 2$ with $\theta(x,y,z) = (x \Rightarrow z) \wedge y$ (see \cite{johnstone2004heyting}) and consider the extension of Heyting semilattices given by $\{a \le 1 \} \hookrightarrow \{ 0 \le a \le 1 \} \twoheadrightarrow \{ [0] \le [a] = [1] \}$ with section defined by $[0] \mapsto 0$ and $[1] \mapsto 1$.
If $q_1(x) = x \Rightarrow sp(a)$, then \emph{any} map $q_2$ such that $q_2(1) = 1$ and $q_2(a) = a$ witnesses the weakly Schreier condition.
See \cite[Theorem 4.2]{clementino2015semidirect} or \cite[Corollary 3.2]{gray2015algebraic} for a characterisation of the semi-abelian varieties for which every split extension is Schreier in this sense.
\end{remark}

Schreier split extensions of unital magmas were studied in \cite{gran2019split}. Taking $\theta$ to be the magma operation we recover a more general class of `\emph{weakly} Schreier' split extensions of magmas. The Schreier extensions satisfy an additional condition $q_1(\theta(k x,s b)) = x$, as explained in \cite[pg 6]{martins2022magma}, that forces the underlying set of the middle magma to be the cartesian product of the other two (as in \cref{rem:schreier}).

Our main new example is again monoids, but this time with $\theta$ given by the ternary term $\theta(x,y,z) = x + z + y$.
In this case, an extension \splitext{X}{k}{A}{p}{s}{B} is $\theta$-weakly-Schreier if and only if there are $q_1,q_2 \colon A \to X$ such that $a = kq_1(a) + sp(a) + kq_2(a)$.
This is a strictly more general class than the usual class of weakly Schreier split extensions of monoids, which corresponds to requiring $q_2 \equiv 0$. 

In particular, we note that in contrast to what happens with the usual notion of weakly Schreier extensions, the middle monoid $A$ here can have a cardinality strictly larger than that of $X \times B$.
\begin{example}\label{eg: A=5 XxB=4}
Suppose $A$ is given by the following multiplication table.
\begin{center}
\begin{tabular}{>{$}l<{$}|*{5}{>{$}l<{$}}}
\oplus & 0 & 1 & 2 & 3 & 4 \\
\hline\vrule height 12pt width 0pt
0 & 0 & 1 & 2 & 3 & 4 \\
1 & 1 & 1 & 4 & 4 & 4 \\
2 & 2 & 3 & 2 & 3 & 4 \\
3 & 3 & 3 & 4 & 4 & 4 \\
4 & 4 & 4 & 4 & 4 & 4 \\
\end{tabular}
\end{center}
Let $X = B = \{0,1\}$ with saturating addition, let $k$ be the obvious inclusion, let $p$ be defined by

\[p(a) = \begin{cases}
 0 & \text{if $a \le 1$} \\
 1 & \text{if $a \ge 2$},
\end{cases}\]
and suppose $s\colon 1 \mapsto 2$.
This split extension is $\theta$-weakly-Schreier with
\begin{align*}
 q_1(a) &= \begin{cases}
 1 & \text{if $a = 4$} \\
 0 & \text{otherwise}
\end{cases} \\
q_2(a) &= \begin{cases}
 1 & \text{if $a = 1$ or $a = 3$} \\
 0 & \text{otherwise}.
\end{cases}
\end{align*}
Note that $|A| = 5$, while $|X \times B| = 4$.
\end{example}

For this class of monoid extension we can use associativity to say more about the `action' given by $\vec{\gamma}_+$.
Given an extension, this is defined as
\begin{align*}
 (\vec{\gamma}_+(x_1^1,x_2^1,b^1,x_1^2,x_2^2,b^2))_i
  &= q_i(k x_1^1 + s b^1 + k x_2^1 + k x_1^2 + s b^2 + k x_2^2) \\
  &= q_i(k x_1^1 + s b^1 + k(x_2^1 + x_1^2) + s b^2 + k x_2^2).
\end{align*}
Now suppose we set $u = s b^1 + k(x_2^1 + x_1^2) + s b^2$. By the $\theta$-weakly-Schreier condition this can be expressed as $u = kq_1(u) + sp(u) + kq_2(u) = kq_1(u) + s(b^1+b^2) + kq_2(u)$. We then find
\begin{align*}
 (\vec{\gamma}_+(x_1^1,x_2^1,b^1,x_1^2,x_2^2,b^2))_i
  &= q_i(k x_1^1 + u + k x_2^2) \\
  &= q_i(k x_1^1 + kq_1(u) + s(b^1+b^2) + kq_2(u) + k x_2^2) \\
  &= q_i(k(x_1^1 + q_1(u)) + s(b^1+b^2) + k(q_2(u) + x_2^2)).
\end{align*}
In this way $\vec{\gamma}_+$ can be decomposed into four simpler maps.
Set
\begin{align*}
 \sigma_i(b,x,b') &= q_i(s b + k x + s b'), \\
 \tau_i(x,b,x') &= q_i(k x + s b + k x').
\end{align*}
Then
\[
 (\vec{\gamma}_+(x_1^1,x_2^1,b^1,x_1^2,x_2^2,b^2))_i = \tau_i(x_1^1 + \sigma_1(b^1,x_2^1+x_1^2,b^2), b^1+b^2, \sigma_2(b^1,x_2^1+x_1^2,b^2)+x_2^2).
\]
Finally, in this case it is convenient to define $Y$ to be $\{(\vec{x},b) \in X^2 \times B \mid \vec{\gamma}_+(\vec{0},0,\vec{x},b) = \vec{x} \}$.

It is then routine to compute the axioms $\sigma_{1,2}(b,x,b')$ and $\tau_{1,2}(x,b,x')$ to obtain a characterisation of this class of extensions, though we will omit the details.

\section{Properties of these extensions}

Schreier split extensions of monoids are very well-behaved and were the inspiration for the notion of an $S$-protomodular category \cite{bourn2016monoids}. An $S$-protomodular category has a class of split epimorphisms whose corresponding split extensions behave similarly to general split extensions in a protomodular category. In \cite{bourn2015partialMaltsev} Bourn defines a slightly weaker notion of $S$-protomodularity by omitting a certain completeness condition. This weaker definition captures \emph{weakly} Schreier extensions of monoids. Our $\theta$-weakly-Schreier extensions also give rise to weakly $S$-protomodular categories in this sense.

\begin{proposition}
 Let $\mathcal{V}$ be a pointed variety with term $\theta$ as above. The category $\mathbb{C}_\mathcal{V}$ of these algebras together equipped with the class $\Sigma$ of split epimorphisms coming from $\theta$-weakly-Schreier extensions is a weakly $\Sigma$-protomodular category.
\end{proposition}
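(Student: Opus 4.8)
The plan is to verify directly the definition of a weakly $\Sigma$-protomodular category from \cite{bourn2015partialMaltsev}. Recall that this amounts to checking that $\Sigma$ is a pullback-stable class of points containing the isomorphisms, and that every point of $\Sigma$ is a \emph{strong point}, meaning that its kernel $k$ and section $s$ are jointly strongly epimorphic. The qualifier `weakly' signals that, in contrast with the full $S$-protomodular case, we do \emph{not} demand that $\Sigma$ be closed under finite limits in the fibration of points; indeed \cref{eg: A=5 XxB=4} already suggests that this closure must fail here.

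First I would establish that $\Sigma$ is stable under pullback. Given a $\theta$-weakly-Schreier point \splitext{X}{k}{A}{p}{s}{B} with witnessing maps $q_1,\dots,q_n$ and an arbitrary morphism $f\colon B' \to B$, form the pullback point \splitext{X}{k'}{A'}{p'}{s'}{B'}, where $A' = A \times_B B'$ and the kernel is again (a copy of) $X$, with $k'(x) = (kx,0)$ and $s'(b') = (sf(b'),b')$. Setting $q_i'(a,b') = q_i(a)$ (so that $q_i'(0,0) = 0$) and evaluating $\theta$ componentwise in the product, the first component reads $\theta(kq_1(a),\dots,kq_n(a),sf(b'))$; since $f(b') = p(a)$ on $A'$ we have $sf(b') = sp(a)$, so this component equals $a$ by the $\theta$-weakly-Schreier condition, while the $B'$-component is $\theta(0,\dots,0,b') = b'$. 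Hence the pullback point is again in $\Sigma$. That $\Sigma$ contains every isomorphism is immediate: for an iso point one has $sp = \id_A$ and trivial kernel, so the condition holds with $q_i \equiv 0$.

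The heart of the argument is the strong-point condition, and this is essentially free from the representation already developed. For any point in $\Sigma$, the defining identity $\theta(kq_1(a),\dots,kq_n(a),sp(a)) = a$ exhibits every element $a \in A$ as the value of the term $\theta$ on elements of $\mathrm{im}(k) \cup \mathrm{im}(s)$. Consequently the subalgebra of $A$ generated by $\mathrm{im}(k) \cup \mathrm{im}(s)$ is all of $A$, so no proper subobject can factor both $k$ and $s$; that is, $(k,s)$ is jointly strongly epimorphic, which is exactly the strong-point requirement. Together with pullback stability this yields the weak $\Sigma$-protomodular structure, and one may then invoke the general theory of \cite{bourn2015partialMaltsev} to deduce the relative split short five lemma for $\Sigma$-points, the required conservativity being a consequence of the strong-point property in the exact variety $\mathbb{C}_\mathcal{V}$.

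I expect the friction here to be bookkeeping rather than genuine difficulty. The strong-point condition drops out at once, so the one delicate step is the pullback-stability verification: because the witnesses $q_i$ are \emph{not} homomorphisms, I must take care that they transport correctly along the canonical identification of $\ker p'$ with $X$, and that the componentwise evaluation of $\theta$ in $A \times_B B'$ is legitimate (it is, since $A'$ is a subalgebra of $A \times B'$). A secondary point deserving attention is to pin down exactly which closure axioms of \cite{bourn2015partialMaltsev} belong to the \emph{weak} notion, so as to confirm that the finite-limit closure we forgo is genuinely not among the requirements.
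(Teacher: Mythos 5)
Your proof is correct and follows essentially the same route as the paper's: the strong-point condition comes from observing that $\theta(kq_1(a),\dots,kq_n(a),sp(a))=a$ makes $\mathrm{im}(k)\cup\mathrm{im}(s)$ generate all of $A$, isomorphisms are handled by taking $q_i\equiv 0$, and pullback stability is verified with $q_i'(a,b')=q_i(a)$ and componentwise evaluation of $\theta$ in $A\times_B B'$, exactly as in the paper. One caveat: your closing side claim that the general theory then yields a relative split short five lemma for $\Sigma$-points contradicts the paper, which explicitly notes that this \emph{weak} form of $\Sigma$-protomodularity does not imply an analogue of the Split Short Five Lemma --- only the weaker statement for extremal epimorphisms holds.
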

\begin{proof}
 Certainly the category $\mathbb{C}_\mathcal{V}$ is finitely complete and pointed. We require that for every $(p,s) \in \Sigma$, $\ker p$ and $s$ are jointly extremally epic, that $\Sigma$ contains all isomorphisms, and that $\Sigma$ is stable under pullback.
 
 The first condition is immediate from the definition of $\theta$-weakly-Schreier property. For the second, note that if $p$ and $s$ are mutual inverses, then $\theta(k(0),\dots,k(0),sp(a)) = \theta(0,\dots,0,a) = a$ by the assumption on $\theta$ and hence taking $q_1 = \dots = q_n \equiv 0$ we have $(p,s) \in \Sigma$.
 
 For pullback stability, consider the diagram
 \begin{center}
  \begin{tikzpicture}[node distance=2.5cm, auto]
    \node (A) {$A \times_{\scriptscriptstyle B} B'$};
    \node (B) [below of=A] {$A$};
    \node (C) [right of=A] {$B'$};
    \node (D) [right of=B] {$B$};
    \node (E) [left of=B] {$X$};
    \node (F) [left of=A] {$X$};
    \draw[->] (A) to node [swap] {$f'$} (B);
    \draw[transform canvas={yshift=0.5ex},->>] (A) to node {$p'$} (C);
    \draw[transform canvas={yshift=-0.5ex},->] (C) to node {$s'$} (A);
    \draw[transform canvas={yshift=0.5ex},->>] (B) to node {$p$} (D);
    \draw[transform canvas={yshift=-0.5ex},->] (D) to node {$s$} (B);
    \draw[->] (C) to node {$f$} (D);
    \draw[right hook->] (F) to node {$k'$} (A);
    \draw[right hook->] (E) to node [swap] {$k$} (B);
    \draw[double equal sign distance] (F) to (E);
    \begin{scope}[shift=({A})]
        \draw +(0.3,-0.7) -- +(0.7,-0.7) -- +(0.7,-0.3);
    \end{scope}
  \end{tikzpicture}
 \end{center}
 where $(A \times_{\scriptscriptstyle B} B', f', p')$ is the pullback of $f$ and $p$
 and the bottom row is a $\theta$-weakly-Schreier extension.
 Note that by the universal property of the pullback $s'$ is the unique section of $p'$ such that $f's' = sf$.
 We see that $X$ is the kernel of $p'\colon (a,b') \mapsto b'$ since $(a,0) \in A \times_{\scriptscriptstyle B} B'$ if only if $p(a) = f(0) = 0$.
 
 If $q_1,\dots,q_n\colon A \to X$ are the associated maps for the bottom extension, we define maps $q'_1,\dots,q'_n \colon A \times_{\scriptscriptstyle B} B' \to X$ by $q'_i\colon (a,b') \mapsto q_i(a)$.
 Then
 \begin{align*}
  \theta_{A \times_{\scriptscriptstyle B} B'}(& k'q'_1(a,b'),\dots,k'q'_n(a,b'),s'p'(a,b')) \\
  &=
  \theta_{A \times_{\scriptscriptstyle B} B'}(k'q_1 a,\dots,k'q_n a,s'b') \\
  &= (\theta_A (k q_1 a,\dots,k q_n a,sf b'), \theta_{B'}(0,\dots,0,b')) \\
  &= (\theta_A (k q_1 a,\dots,k q_n a,sp a), b') \\
  &= (a,b'),
 \end{align*}
 as required.
\end{proof}

Note that this weak form of $\Sigma$-protomodularity does not imply an analogue of the Split Short Five Lemma. However, we do have the following result for surjections generalising the case of weakly Schreier extensions of monoids.
\begin{lemma}
 Let $\mathbb{C}$ be a pointed weakly $\Sigma$-protomodular category and consider a morphism of extensions from the distinguished class $\Sigma$ as in the following diagram.
 \begin{center}
  \begin{tikzpicture}[node distance=2.0cm, auto]
    \node (A) {$A_1$};
    \node (B) [below of=A] {$A_2$};
    \node (C) [right of=A] {$B_1$};
    \node (D) [right of=B] {$B_2$};
    \node (E) [left of=B] {$X_2$};
    \node (F) [left of=A] {$X_1$};
    \draw[->] (A) to node [swap] {$g$} (B);
    \draw[transform canvas={yshift=0.5ex},->>] (A) to node {$p_1$} (C);
    \draw[transform canvas={yshift=-0.5ex},->] (C) to node {$s_1$} (A);
    \draw[transform canvas={yshift=0.5ex},->>] (B) to node {$p_2$} (D);
    \draw[transform canvas={yshift=-0.5ex},->] (D) to node {$s_2$} (B);
    \draw[->] (C) to node {$h$} (D);
    \draw[right hook->] (F) to node {$k_1$} (A);
    \draw[right hook->] (E) to node [swap] {$k_2$} (B);
    \draw[->] (F) to node [swap] {$f$} (E);
  \end{tikzpicture}
 \end{center}
 If $f$ and $h$ are extremal epimorphisms, then so is $g$.
\end{lemma}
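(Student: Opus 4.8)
The plan is to verify directly that $g$ has the defining factorisation property of an extremal epimorphism: whenever $g = m e$ for a monomorphism $m \colon C \to A_2$, the map $m$ must be an isomorphism. The engine for the conclusion will be the joint extremal epicness of $(k_2, s_2)$, which holds because $(p_2, s_2)$ lies in $\Sigma$ and $\mathbb{C}$ is weakly $\Sigma$-protomodular. So it suffices to show that both $k_2$ and $s_2$ factor through $m$; jointness then forces $m$ to be an isomorphism, and $g$ is extremal epic.

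First I would isolate a small factorisation principle: if $m \colon C \to A_2$ is monic, $\beta \colon Z \to A_2$ is any morphism, and there is an extremal epimorphism $t \colon Z' \to Z$ such that $\beta t$ factors through $m$, then $\beta$ itself factors through $m$. This is proved by pulling back $m$ along $\beta$: the pullback projection $\pi \colon P \to Z$ is a monomorphism (monos being stable under pullback), and writing the other projection as $\pi' \colon P \to C$ we have $\beta \pi = m \pi'$. The hypothesis that $\beta t$ factors through $m$ yields, by the universal property of the pullback, a factorisation of $t$ through $\pi$; since $t$ is extremal epic and $\pi$ is monic, $\pi$ is an isomorphism, whence $\beta = m \pi' \pi^{-1}$ factors through $m$. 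This step uses only finite completeness of $\mathbb{C}$.

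Then I would apply this principle twice. Since the square is a morphism of extensions we have $g k_1 = k_2 f$ and $g s_1 = s_2 h$. Writing $g = m e$, the first relation gives $k_2 f = m(e k_1)$, so $k_2 f$ factors through $m$; as $f$ is an extremal epimorphism, the principle shows $k_2$ factors through $m$. The second relation gives $s_2 h = m(e s_1)$, so $s_2 h$ factors through $m$; as $h$ is an extremal epimorphism, the principle shows $s_2$ factors through $m$. With $k_2$ and $s_2$ both factoring through the monomorphism $m$, the joint extremal epicness of $(k_2, s_2)$ forces $m$ to be an isomorphism. Finally, since $\mathbb{C}$ is finitely complete, the factorisation property just established also makes $g$ an epimorphism: any parallel pair coequalised by $g$ has an equaliser through which $g$ factors, so the equaliser inclusion is an isomorphism and the pair agrees. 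Hence $g$ is an extremal epimorphism.

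The step I expect to be the main (if modest) obstacle is the clean handling of the pullback/factorisation principle, together with the care needed to see that it is extremal epicness of $f$ and $h$, rather than mere epicness, that transfers across the pullback. Everything else is bookkeeping with the commuting squares of the morphism of extensions.
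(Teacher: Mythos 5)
Your proof is correct and takes essentially the same approach as the paper: both arguments reduce the claim to the joint extremal epicness of $(k_2, s_2)$ guaranteed by weak $\Sigma$-protomodularity, and then transfer factorisations through a monomorphism across the commuting squares $g k_1 = k_2 f$ and $g s_1 = s_2 h$ using the extremal epicness of $f$ and $h$. The only difference is that you prove in detail, via the pullback of the mono, the fact the paper invokes without proof (that composing a jointly extremally epic family with extremal epimorphisms yields a jointly extremally epic family), and you also record the standard equaliser argument showing the factorisation property implies epicness.
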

\begin{proof}
 By the condition on $\Sigma$, the maps $k_2$ and $s_2$ are jointly extremally epic.
 Since $\mathbb{C}$ has pullbacks, extremal epimorphisms are closed under composition. In fact, it can be shown that composites of jointly extremally epic maps with extremal epimorphisms are still jointly extremally epic.
 Thus, $k_2 f = g k_1$ and $s_2 h = g s_1$ are jointly extremally epic.
 It follows that $g$ is extremally epic, as required.
\end{proof}

Finally, in good cases we would probably expect products $X \times B$ to give rise to $\theta$-weakly-Schreier split extensions. By pullback stability, this holds if and only if the extension $X \xrightarrow{\id} X \to 1$ is a $\theta$-weakly-Schreier extension. This holds whenever there are maps $q_1,\dots,q_n\colon X \to X$ such that $\theta(q_1(x),\dots,q_n(x),0) = x$ for all $x \in X$.
We note that this condition is satisfied in all our examples,
but fails for the variety of left-unital magmas (taking $\theta$ to be the product).

\bibliographystyle{abbrv}
\bibliography{references}

\end{document}